\newtheorem{theorem}{Theorem}[section]
\newtheorem{proposition}[theorem]{Proposition}
\newtheorem{lemma}[theorem]{Lemma}
\theoremstyle{definition}
\newtheorem*{question}{Question}
\theoremstyle{remark}
\newtheorem{remark}[theorem]{Remark}
\numberwithin{equation}{section}
\newcommand{\de}{\delta}
\newcommand{\ep}{\epsilon}
\newcommand{\ga}{\gamma}
\newcommand{\la}{\lambda}
\newcommand{\te}{\theta}
\newcommand{\vp}{\varphi}
\newcommand{\De}{\Delta}
\newcommand{\La}{\Lambda}
\newcommand{\Si}{\Sigma}
\newcommand{\Om}{\Omega}
\newcommand{\Omc}{{\Omega^c}}
\def\RR{\mathbb{R}}
\def\ZZ{\mathbb{Z}}
\renewcommand\SS{\mathbb{S}}
\newcommand{\cW}{{\mathcal W}}
\newcommand{\pd}{\partial}
\newcommand\minus\backslash
\newcommand\lan\langle
\newcommand\ran\rangle
\newcommand{\rank}{\operatorname{rank}}
\newcommand{\Span}{\operatorname{span}}
\newcommand\DD{\mathbb D}
\renewcommand\leq\leqslant
\renewcommand\geq\geqslant
\newlength{\intwidth}
\begin{document}

\title[Laplace operators with eigenfunctions whose nodal set is a knot]{Laplace operators with eigenfunctions\\ whose nodal set is a knot}

\author{Alberto Enciso}
\address{Instituto de Ciencias Matem\'aticas, Consejo Superior de
  Investigaciones Cient\'\i ficas, 28049 Madrid, Spain}
\email{aenciso@icmat.es, david.hartley@icmat.es, dperalta@icmat.es}

\author{David Hartley}

\author{Daniel Peralta-Salas}

%
%
\begin{abstract}

We prove that, given any knot $\ga$ in a compact $3$-manifold
$M$, there exists a Riemannian metric on $M$ such that there is a
complex-valued eigenfunction $u$ of the Laplacian, corresponding to the first
nontrivial eigenvalue, whose nodal set $u^{-1}(0)$ has a
connected component given by $\ga$. Higher dimensional analogs of this
result will also be considered.
\end{abstract}
\maketitle

\section{Introduction}
\label{S.intro}

Let $M$ be a closed manifold of dimension
$3$ endowed with a smooth Riemannian metric $g$. The
eigenfunctions of $M$ satisfy the equation
\[
\De u_k=-\la_k u_k\,,
\]
where $0=\la_0<\la_1\leq\la_2\leq\dots$
are the eigenvalues of $M$ and $\De$ is the Laplace operator of the
manifold. As is well known, the zero set $u_k^{-1}(0)$ is called the {\em nodal set}\/
of the eigenfunction. 

Our goal in this paper is to establish the existence of knotted
nodal sets in eigenfunctions of the Laplacian on a Riemannian
3-manifold. Specifically, the starting point of the present paper is
the following

\begin{question}
Let $\ga$ be a (possibly knotted) curve in $M$. Is there a complex-valued eigenfunction~$u$ of the Laplacian whose
nodal set $u^{-1}(0)$ is diffeomorphic to $\ga$?
\end{question}

Before getting into the background of this question, it is worth
discussing why the eigenfunction $u$ is assumed complex-valued. The
idea is that, by the maximum principle, the nodal set of a real-valued eigenfunction
(i.e., $u_k^{-1}(0)$) cannot have a connected component of codimension
greater than or equal to 2, so there cannot be a real eigenfunction of
the Laplacian whose nodal set has a component diffeomorphic to the curve~$\ga$. A way to bypass this obstruction is
to take a complex function of the form, say, $u:=u_1+i u_2$, which will
indeed be an eigenfunction provided that $\la_1=\la_2$. Notice that $u^{-1}(0)=u_1^{-1}(0)\cap u_2^{-1}(0)$. Hence the
above question is in fact a problem about Laplacians with degenerate
eigenvalues.

The context of this question is the study of the nodal sets of the
eigenfunctions of the Laplacian in a compact Riemannian manifold,
which is a classical topic in geometric analysis with a number of
important open problems~\cite{Ya82,Ya93}. In a way, this question
addresses the flexibility of low-energy Laplace eigenfunctions, as we
shall explain next.

It is well known that high-energy eigenvalues and
eigenfunctions of the Laplacian are rather rigid, as illustrated by
Weyl's asymptotic formula for the eigenvalues and by Yau's conjecture
concerning the Hausdorff measure of the eigenfunction's nodal sets as
the eigenvalue tends to infinity, which was proved for real-analytic
metrics in~\cite{DF}. Indeed, the measure-theoretic properties of the
nodal set of high-energy eigenfunctions have been a hot topic of
research for decades~\cite{DF,HS,Lin}; a recent account of the subject
can be found in~\cite{JNT}. 

On the contrary, the situation for low-energy eigenvalues and eigenfunctions is much
more flexible. The eigenvalue case was considered by Colin de Verdi\`ere
in~\cite{Colin2}, where it was shown that there are metrics
whose first $N$ eigenvalues can be prescribed a priori. Concerning
nodal sets, the problem of constructing a metric on $M$ for which the nodal set $u_1^{-1}(0)$ of the first
eigenfunction has a prescribed connected component was solved
in~\cite{EPpre} (see also~\cite{EJN,Komen} for the two dimensional
case). 

The question that we have started with addresses the possibility
of prescribing a component of the intersection of nodal sets
\[
u^{-1}(0)=u_1^{-1}(0)\cap u_2^{-1}(0)\,,
\]
subject to the additional constraint that the eigenvalues $\la_1$ and
$\la_2$ should coincide. This constraint is important, since it is
well known that for a generic metric the eigenvalues are all
simple~\cite{Uh76}. In a way, this problem is the metric counterpart
of Berry's conjecture~\cite{Be01}, recently proved in~\cite{EHP},
which states that there should be a smooth (real) potential for which
the Schr\"odinger operator $-\Delta+V$ in $\RR^3$ admits a
complex-valued eigenfunction whose nodal set has a connected component
of any given knot type.

Our main result in this paper provides an affirmative answer to the
above question. Specifically, we show that given a smooth closed
curve~$\ga$ (without self-intersections, but possibly knotted), there
is a metric for which $u^{-1}(0)$ has a connected component given by
$\ga$, with $u:=u_1+i u_2$ corresponding to the first eigenvalue
$\la_1=\la_2$, which has multiplicity 2. Moreover, the knot $\ga$ is
{\em structurally stable}\/ in the sense that any small enough perturbation of
$u$ (in the $C^k$ norm with $k\geq1$) has a connected component in its
nodal set that is a small perturbation of~$\ga$ (i.e., the image of
$\ga$ under a smooth diffeomorphism of $M$ that is close to the
identity in the $C^k$ norm). We are thus led to the following:

\begin{theorem}\label{T.main}
Let $\ga$ be a (possibly knotted) closed curve in a closed $3$-manifold $M$. Then there
exists a Riemannian metric $g$ on~$M$ such that its first nontrivial
eigenvalue has multiplicity $2$ (i.e., $\la_1=\la_2\neq\la_3$) and $\ga$ is a
connected component of the nodal set $u^{-1}(0)$ of the complex-valued
eigenfunction $u=u_1+i u_2$. Moreover, $\ga$ is a structurally stable
component of the nodal set. 
\end{theorem}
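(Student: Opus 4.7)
The plan has three phases: construct a local model of the metric and two local eigenfunctions near $\ga$ whose common nodal set is exactly $\ga$ (transversally); globalize to $M$ while forcing the first nontrivial eigenvalue to be exactly double; and derive structural stability from transversality.

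For the \emph{local model}, I would work in a tubular neighborhood $U$ of $\ga$ with Fermi coordinates $(s,x_1,x_2)$ such that $\ga=\{x_1=x_2=0\}$ and $s\in\RR/L\ZZ$ is arc-length. A convenient choice is a warped-product metric of the form $g_0=ds^2+h(s)^2(dx_1^2+dx_2^2)$ on $U$, for which the Laplace--Beltrami operator separates variables and admits explicit (approximate) eigenfunctions of the shape $v_j=x_j\,\phi(s)+O(|x|^2)$, $j=1,2$, sharing a common eigenvalue $\la$. By construction, $v_1^{-1}(0)\cap v_2^{-1}(0)$ contains $\ga$ as a transverse intersection: the differentials $dv_j$ restrict to $\phi(s)\,dx_j$ along $\ga$ and are linearly independent there.

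To \emph{globalize}, I would extend $g_0$ to a metric on $M$ and then adjust this extension so that $\la_1=\la_2$ is an exact double eigenvalue, separated from $\la_3$, and the corresponding two-dimensional eigenspace is spanned by functions that are $C^k$-close to $v_1,v_2$. The natural route is the strategy of~\cite{EPpre}: one rescales the metric on $M\setminus U$ so that the low-lying spectrum of $(M,g)$ concentrates on $U$ and approximates the spectrum of the local model, the first two modes then corresponding to $v_1,v_2$. To force the exact degeneracy $\la_1=\la_2$ one can invoke the Colin de Verdi\`ere construction of~\cite{Colin2} as a black box, or, more concretely, set up a one-parameter family of metrics and combine an intermediate-value argument on the spectral gap $\la_2-\la_1$ with analytic perturbation theory for the associated spectral projector.

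\emph{Structural stability} is then a clean consequence of transversality. Since $dv_1\wedge dv_2\neq0$ along $\ga$, the same holds along $\ga$ for any pair $(w_1,w_2)$ obtained as the real and imaginary parts of a $C^k$-small perturbation of $u=u_1+\iu u_2$. The implicit function theorem, applied to $(w_1,w_2)\colon U\to\RR^2$ near its regular value $(0,0)$, yields a diffeomorphism $\Phi$ of $M$ close to the identity such that $\Phi(\ga)$ is a connected component of $w_1^{-1}(0)\cap w_2^{-1}(0)$. The \emph{main obstacle} is the simultaneous control of eigenvalues and eigenfunctions in the globalization step. Colin de Verdi\`ere's method prescribes eigenvalue multiplicities abstractly, but here the double first eigenvalue must come with eigenfunctions whose nodal intersection is precisely $\ga$; coupling the multiplicity prescription with the geometric and analytic data near $\ga$ — so that the metric modifications required to close the eigenvalues do not destroy the local model, and so that no accidental crossing with $\la_3$ occurs — is the delicate point and the bulk of the technical work.
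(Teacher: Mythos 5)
Your overall skeleton (an explicit product-type model near $\ga$ with two modes vanishing transversally on $\ga$, a rescaling of the metric off the tube so that the low-lying spectrum of $M$ approximates that of the tube, and a final transversality/isotopy argument for structural stability) is indeed the paper's strategy, and your last step is essentially the paper's Step~4, up to the small point that to obtain $\ga$ itself, rather than a curve diffeomorphic to it, one must pull the metric back by the isotopy diffeomorphism.

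The genuine gap is the degeneracy-forcing step, which you flag as ``the main obstacle'' but do not resolve, and the one concrete mechanism you propose would fail. An intermediate-value argument on the gap $\la_2-\la_1$ along a one-parameter family of metrics cannot work: the gap is nonnegative, so it never changes sign, and making it vanish is a codimension-two phenomenon for real self-adjoint operators (this is why Uhlenbeck's genericity theorem~\cite{Uh76} is invoked in the paper to explain that a careful deformation is needed); with a single parameter a crossing is generically avoided, and analytic perturbation theory of the spectral projector gives no mechanism to hit an exact crossing. Invoking~\cite{Colin2} as a black box does not help either, since that construction prescribes eigenvalues abstractly and gives no control of the eigenfunctions near $\ga$. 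What the paper actually does (Proposition~\ref{evalcol2}) is a \emph{two}-parameter conformal deformation $h_s=e^{2s_1\psi_1+2s_2\psi_2}$ of the rescaled metric, a Hadamard-type formula for the $s$-derivatives of the Neumann eigenvalues, the explicit choice $\psi_1=1$, $\psi_2=\hat v_{k_0}$ with $\int_\Om (v_1^2-v_2^2)\,v_{k_0}\neq 0$ so that $D_s\La(0,0)$ has rank $2$, and then a non-smooth implicit function theorem (Halkin~\cite{Halkin}) to solve $\la_{1,\ep,s}=\la_{2,\ep,s}=\mu_{\DD^2}$ for $s=R(\ep)$; the non-smooth version is forced because the eigenvalues are only continuous, not differentiable, in $\ep$ at $\ep=0$, which is where the limiting Neumann problem sits. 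This same construction keeps the eigenfunctions $H^1(\Om)$-close to $v_1,v_2$ (upgraded to $C^1$ on compact subsets by elliptic estimates), which is precisely the coupling between multiplicity and nodal data that your proposal leaves open. A smaller but real issue: your local modes are only approximate eigenfunctions of a warped product sharing an eigenvalue; the paper instead takes $g_0|_\Om=A^{-1}ds^2+|dz|^2$ with $A>\mu_{\DD^2}/(4\pi^2)$, so that $v_1,v_2$ are \emph{exact} Neumann eigenfunctions of $\Om$ and the first nontrivial Neumann eigenvalue has multiplicity exactly two, separated from the rest of the spectrum; without some such quantitative choice you have not verified that your two modes really are the first and second nontrivial ones, which the whole convergence and perturbation scheme requires.
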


Equivalently, the theorem asserts that the closed curve~$\ga$ is a
connected component of the intersection of the nodal sets of the
real-valued eigenfunctions $u_1^{-1}(0)\cap u_2^{-1}(0)$ for any basis
of the eigenspace corresponding to the first nontrivial eigenvalue.

The proof of this result starts off with the construction of a metric
$g$ that is of order~$\ep$ in the complement of a tubular neighborhood
$\Om$ of the curve $\ga$. If $\ep$ is small enough, the eigenfunctions
$u_1$ and $u_2$ are close in the set~$\Om$ to the first nontrivial Neumann
eigenfunctions $v_1$ and $v_2$ of this set, so in principle the nodal set of the former can be controlled in terms of the latter using Thom's isotopy theorem. To develop this strategy there are two difficulties one has to deal with. First, the intersection $v_1^{-1}(0)\cap v_2^{-1}(0)$ must be transverse in order to apply Thom's isotopy theorem. Second, the eigenvalue $\la_1$ must be degenerate, which is a non-generic property according to Uhlenbeck's theorem~\cite{Uh76}. These problems are overcome by constructing very carefully the metric $g$ in successive steps.

Since the nodal set of the complex-valued eigenfunction $u=u_1+i u_2$
is the intersection of the nodal sets $u_1^{-1}(0)\cap u_2^{-1}(0)$,
the question that we have started our paper with is simply the easiest
case of whether we can realize a set of codimension $m\geq2$ in a
compact $d$-manifold as the intersection of the nodal sets
\[
u_1^{-1}(0)\cap \cdots \cap u_m^{-1}(0)
\]
of $m$
real-valued eigenfunctions having the same energy $\la_1=\cdots= \la_m$ (of
multiplicity $m$). The
full problem will be tackled in Section~\ref{S.higher}, where we
provide a higher-dimensional version of Theorem~\ref{T.main} (Theorem~\ref{T.high}). This is
a significant generalization of~\cite{EPpre}, but its statement is a
little more technical than that of Theorem~\ref{T.main}.

The content of this paper is as follows. The proof of Theorem~\ref{T.main} is presented in Section~\ref{S.main}, although the proofs of two technical results, Propositions~\ref{lamconv} and~\ref{evalcol2}, are relegated to
Sections~\ref{S:prop} and~\ref{S.lemma}, respectively. Finally, in
Section~\ref{S.higher} we state and prove a higher dimensional analog of Theorem~\ref{T.main}.

\section{Proof of the main theorem}
\label{S.main}

The proof of Theorem~\ref{T.main} is divided in four steps. In Step~1
we introduce an appropriate metric $g_0$ whose explicit expression in
a tubular neighborhood $\Om$ of the curve $\ga$ allows us to compute,
in closed form, the first Neumann eigenvalues and eigenfunctions of the
domain $\Om$. In Step~2 we rescale the metric $g_0$ with a smooth
factor that is of order $\ep$ in the complement of $\Om$, and show
that the eigenvalues and eigenfunctions of $M$ with this metric tend
to the Neumann eigenvalues and eigenfunctions of $\Om$ as $\ep\searrow
0$ (Proposition~\ref{lamconv}). In Step~3 we prove that the metric
introduced in the previous step can be deformed so that its first
nontrivial eigenvalue has multiplicity 2 and its corresponding
eigenspace is close to an eigenspace of Neumann eigenfunctions of
$\Om$ (Proposition~\ref{evalcol2}). Finally, in Step~4 we use Thom's
isotopy theorem to control the intersection of the nodal sets of the
eigenfunctions obtained before.        

\subsubsection*{Step 1: Neumann
  eigenfunctions of a certain bounded domain}

Let us consider a tubular neighborhood $\Om$ of the curve $\ga$,
which is diffeomorphic to the product $\SS^1\times\DD^2$ because the
normal bundle of a knot is always trivial~\cite{Ma59}. Hence we will
parametrize the set $\Om$ using coordinates $z=(z_1,z_2)\in\DD^2$  and
$s\in\RR/\ZZ$. Here $\DD^2$ denotes the unit
two-dimensional disk. 

We shall take a metric $g_0$ on $M$ such that its restriction to $\Om$
can be written in these coordinates as
\[
g_0|_\Om:=A^{-1}\, ds^2+|dz|^2=A^{-1}\, ds^2+ dr^2+r^2\, d\te^2\,.
\]
Here $(r,\te)$ are the polar coordinates in the disk associated to $z$
and $A$ is a large enough constant. Specifically, we assume that 
\begin{equation}\label{eqA}
A>\frac{\mu_{\DD^2}}{4\pi^2}\,,
\end{equation}
where $\mu_{\DD^2}$ is the first nontrivial Neumann eigenvalue of the unit
$2$-disk. Notice that the coordinate $s$ essentially corresponds to a
parametrization of the curve~$\ga$ proportional to arc-length and that
the variables~$z$ describe a disk orthogonal to the curve.

Let us consider the Neumann eigenvalue problem
\[
\De v_k=-\mu_k v_k\quad \text{in } \Om\,,\qquad \pd_\nu v_k=0\quad \text{on } \partial\Om\,,
\]
where $\De$ stands for the Laplacian associated with the metric~$g_0$.
By separation of variables and using the condition~\eqref{eqA}, we infer that the Neumann eigenvalues $\mu_k$ of the domain
$\Om$ with respect to the metric~$g_0$ are then $\mu_0=0$,
\[
\mu_1=\mu_2=\mu_{\DD^2}\,,
\]
and $\mu_k>\mu_{\DD^2}$ for $k\geq3$. Hence the eigenspace of $\mu_1$ is two-dimensional, with a particular orthogonal basis being
\[
v_1=J_1(\sqrt{\mu_1}r)\, \cos\te\,,\qquad v_2= J_1(\sqrt{\mu_1} r)\, \sin\te\,.
\]
Here $J_1$ denotes the Bessel function.

Since $J_1(\sqrt{\mu_1}r)$ does not vanish for $0<r<1$ and the eigenfunctions
behave in a neighborhood of the origin as 
\[
v_1=\frac{\sqrt{\mu_1}}2 r\,\cos\te+ O(r^2)=\frac{\sqrt{\mu_1}}2 z_1+ O(|z|^2)\,,\qquad v_2=\frac{\sqrt{\mu_1}}2 z_2+ O(|z|^2)\,,
\]
it is elementary that, as the eigenfunctions are only defined in the
set $|z|<1$, the intersection
\[
v_1^{-1}(0) \cap v_2^{-1}(0)
\]
is the set $z=0$, that is, the curve $\ga$ (which is identified via
the coordinates with the set $\SS^1\times\{0\}$). Moreover, the
behavior of the Neumann eigenfunctions near the set $z=0$ shows that their
intersection is {\em transverse}\/ in the sense that
\begin{equation}\label{rank}
\rank(\nabla v_1(x),\nabla v_2(x))=2
\end{equation}
for all $x\in v_1^{-1}(0) \cap v_2^{-1}(0)$. These properties obviously
hold for any other basis $\{v_1,v_2\}$ of the eigenspace
corresponding to $\mu_1$.

\subsubsection*{Step 2: Convergence of eigenvalues and eigenfunctions}

Let us take a one-parameter family of smooth, positive, uniformly bounded functions $f_t$ on
$M$ such that $f_t(x)=\ep$ in the complement $\Om^c:=M\backslash\Om$
and $f_t(x)=1$ if $x\in\Om$ and the distance between $x$ and $\pd\Om$,
as measured with
the metric $g_0$, is larger than $1/t$. Here both $t$ and $\ep$ are positive real numbers, and $f_t$ is assumed to be analytic in $t$ and $\ep$. Notice that the smooth
functions $f_t$ converge pointwise to the discontinuous function
\[
f(x):=\begin{cases}
\ep& \text{if }x\in \Om^c\,,\\
1& \text{if }x\in \Om\,,
\end{cases}
\]
as $t\to \infty$.

Let us now define the one-parameter family of smooth metrics $g_t:=f_t\,g_0$. In the
following proposition we consider the behavior of the eigenvalues
$\la_{k,t}$ and eigenfunctions~$u_{k,t}$ of the metric $g_t$ as the
parameter~$\ep$ tends to zero and $t$ tends to infinity. Specifically,
we are interested in comparing these quantities with the Neumann
eigenvalues~$\mu_k$ and eigenfunctions~$v_k$ of the domain $\Om$,
which were introduced in Step~1:

\begin{proposition}\label{lamconv}
There is a basis of the eigenfunctions $u_{k,t}$ of the Laplacian on $M$ with
the metric~$g_t$ and a basis $v_k$ of the Neumann eigenfunctions of the
domain~$\Om$ such that
\begin{align*}
\la_{k,t}-\mu_k&=o(1)\,,\\
\|u_{k,t}-v_k\|_{H^1(\Om)}&=o(1)\,.
\end{align*}
Here we are denoting by $o(1)$ quantities that tend to zero (not
uniformly in $k$) as $\ep\searrow0$ and $t\to\infty$.
\end{proposition}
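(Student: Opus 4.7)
My plan is to approach Proposition~\ref{lamconv} via the min--max characterisation of the eigenvalues combined with an $H^1(\Om)$-compactness argument, an approach analogous to Mosco convergence of quadratic forms when one region of a manifold is collapsed. Writing the Rayleigh quotient in the metric $g_t$ via the identities
\[
\int_M|\nabla u|_{g_t}^2\,dV_{g_t}=\int_M f_t^{1/2}|\nabla u|_{g_0}^2\,dV_{g_0},\qquad \int_M u^2\,dV_{g_t}=\int_M f_t^{3/2}u^2\,dV_{g_0},
\]
one sees that the transition collar $\{x\in\Om:\dist_{g_0}(x,\pd\Om)\leq1/t\}$ on which $f_t$ interpolates between $\ep$ and $1$ has $g_0$-volume $O(1/t)$, so it contributes $o(1)$ to every Rayleigh-quotient computation. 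I would work throughout with the joint limit $\ep\searrow0$ and $t\to\infty$.

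\textbf{Upper bound.} Fixing an $L^2(\Om)$-orthonormal basis $v_0,\dots,v_k$ of Neumann eigenfunctions of~$\Om$ and extending each one to a smooth $\tilde v_j\in C^\infty(M)$ via an extension operator composed with a cutoff, the trial space $V_k:=\Span(\tilde v_0,\dots,\tilde v_k)$ is $(k+1)$-dimensional. Since $f_t\to1$ on compact subsets of $\Om$ while $f_t=\ep$ on $\Om^c$, the $\Om$-integrals converge to the corresponding Neumann values, the $\Om^c$-contributions are $O(\ep^{1/2})$ in the numerator and $O(\ep^{3/2})$ in the denominator, and the collar contributes $O(1/t)$. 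Thus the matrices $(a_t(\tilde v_i,\tilde v_j))$ and $(b_t(\tilde v_i,\tilde v_j))$ converge to $\diag(\mu_j\|v_j\|^2_{L^2(\Om)})$ and $\diag(\|v_j\|^2_{L^2(\Om)})$ respectively, so $\max_{u\in V_k}R_{g_t}(u)=\mu_k+o(1)$ and the min--max principle yields $\la_{k,t}\leq\mu_k+o(1)$.

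\textbf{Lower bound and convergence of eigenfunctions.} Normalising $u_{k,t}$ in $L^2(M,g_t)$, the upper bound combined with $\int_\Om|\nabla u_{k,t}|_{g_0}^2\leq\la_{k,t}$ and $\int_\Om u_{k,t}^2\leq C$ gives a uniform $H^1(\Om)$ bound. Passing to a subsequence, $u_{k,t}\rightharpoonup v_k^*$ weakly in $H^1(\Om)$ and strongly in $L^2(\Om)$. Testing the weak eigenvalue equation against any $\phi\in C^\infty(\overline\Om)$ extended smoothly to $M$ and passing to the limit (the collar vanishes and the $\Om^c$-contributions die because of the factors $f_t^{1/2}$ and $f_t^{3/2}$) shows that $v_k^*$ is a Neumann eigenfunction with eigenvalue $\la_k^*:=\lim\la_{k,t}$. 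Once the $\{v_k^*\}$ are known to be $L^2(\Om)$-orthonormal, their span has dimension $k+1$, forcing $\la_k^*\geq\mu_k$; combined with the upper bound this gives $\la_{k,t}\to\mu_k$, and the energy identity $\|\nabla u_{k,t}\|^2_{L^2(\Om)}\to\mu_k=\|\nabla v_k^*\|^2_{L^2(\Om)}$ upgrades the weak $H^1(\Om)$-convergence to strong, as required.

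\textbf{Main obstacle.} The delicate point is the $L^2(\Om)$-orthonormality of the limits, because the $L^2(M,g_t)$-normalisation of $u_{k,t}$ only forces $\int_{\Om^c}\ep^{3/2}u_{k,t}^2\leq 1$, which does \emph{not} a priori prevent concentration of mass in $\Om^c$. The key observation is that in $\Om^c$ one has $\De_{g_0}u_{k,t}=-\ep\la_{k,t}u_{k,t}$, so for small $\ep$ the eigenfunction is nearly harmonic there. Combining the $H^1(\Om)$ bound and interior elliptic regularity near $\pd\Om$ with the maximum principle on~$\Om^c$, I expect to obtain a uniform $L^\infty(\Om^c)$ bound on $u_{k,t}$; this gives $\int_{\Om^c}\ep^{3/2}u_{k,t}^2=O(\ep^{3/2})\to 0$, from which $\int_\Om u_{i,t}u_{j,t}\to\delta_{ij}$, closing the orthonormality argument and completing the proof.
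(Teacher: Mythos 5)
Your upper bound is essentially the paper's (the paper extends the Neumann eigenfunctions harmonically to $M$ rather than by a cutoff extension, which changes nothing), but your lower bound and eigenfunction convergence are attempted by a direct compactness argument in the joint limit for the smooth metric $g_t$, and this is where there is a genuine gap. The uniform bounds you start from are not justified: the normalisation gives $\int_M f_t^{3/2}u_{k,t}^2\,dV_{g_0}=1$ and $\int_M f_t^{1/2}|\nabla u_{k,t}|^2_{g_0}\,dV_{g_0}=\la_{k,t}$, which control the unweighted $g_0$-quantities only where $f_t=1$, i.e.\ at distance $>1/t$ from $\pd\Om$. On the transition collar inside $\Om$ the factor $f_t$ is only bounded below by $\ep$, so all you get there is $\int|\nabla u_{k,t}|^2_{g_0}\leq C\ep^{-1/2}$ and $\int u_{k,t}^2\leq \ep^{-3/2}$; your claimed inequalities $\int_\Om|\nabla u_{k,t}|^2_{g_0}\leq\la_{k,t}$ and $\int_\Om u_{k,t}^2\leq C$ do not follow. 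The remark that the collar ``contributes $o(1)$ to every Rayleigh-quotient computation'' because its volume is $O(1/t)$ is valid for fixed smooth trial functions (hence for the upper bound) but not for the eigenfunctions themselves, which could a priori concentrate there; without the uniform $H^1(\Om)$ bound, the weak-compactness step, the identification of the limit equation on all of $\Om$, and the orthonormality claim $\int_\Om u_{i,t}u_{j,t}\to\de_{ij}$ (which also needs the weighted collar term to vanish) all break down. A second gap is your treatment of possible mass concentration in $\Om^c$: the maximum principle requires a sup bound for $u_{k,t}$ on $\pd\Om$, and ``interior elliptic regularity near $\pd\Om$'' cannot supply it uniformly, because on the $\Om$ side of $\pd\Om$ the equation in divergence form has the coefficient $f_t^{1/2}$, which degenerates to $\ep^{1/2}$ and oscillates on scale $1/t$; moreover an $H^1(\Om)$ bound would at best give an $H^{1/2}(\pd\Om)$ trace bound, not an $L^\infty$ bound.

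The paper's proof is organised precisely so as to avoid these issues, and it is worth comparing. It first works with the \emph{discontinuous} metric $g_\ep=f\,g_0$, for which $f\equiv1$ on all of $\Om$ and there is no collar, and proves the two-sided eigenvalue bounds by min--max together with the orthogonal splitting $H^1(M)=\cH_1\oplus\cH_2$ into functions harmonic in $\Om^c$ and functions in $H^1_0(\Om^c)$ vanishing on $\Om$: the $\cH_2$-component of any test function with bounded Rayleigh quotient is $O(\ep)$-small because of the factor $\ep^{-1}$ against the first Dirichlet eigenvalue of $\Om^c$, while the $\cH_1$-component in $\Om^c$ is controlled by its boundary trace via elliptic estimates --- this is exactly the control over $\Om^c$ that your maximum-principle step was meant to provide, obtained in an $L^2$-based way that needs no sup bounds. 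Only afterwards, for fixed $\ep$, does the paper pass from $g_\ep$ to the smooth metrics $g_t$, citing standard spectral convergence as $t\to\infty$ (Bando--Urakawa), so the collar degeneration never has to be estimated. To repair your argument you would either need to adopt this two-step (iterated-limit) structure, or genuinely prove the collar and $\Om^c$ estimates you are implicitly assuming --- for instance by solving the boundary value problem for $\De_{g_0}+\ep\la_{k,t}$ in $\Om^c$ with the trace of $u_{k,t}$ as data, which still requires an $H^1$ bound up to $\pd\Om$ that your sketch does not establish.
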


The proof of this result is presented in Section~\ref{S:prop}, and uses ideas
introduced by Colin de Verdi\`ere~\cite{Colin}.

\subsubsection*{Step 3: Collapse of eigenvalues}
From Proposition~\ref{lamconv} we infer that the eigenvalues
$\la_{1,t}$ and $\la_{2,t}$ are very close to the first Neumann
eigenvalue $\mu_{\DD^2}$ provided that $t$ is large enough and $\ep$
is sufficiently small. Uhlenbeck's theorem~\cite{Uh76} implies that,
generically, $\la_{1,t}\neq \la_{2,t}$, so generally if we want to get a sequence of eigenvalues of multiplicity~2 converging to $\mu_{\DD^2}$, we need to deform the family of metrics $g_t$. In the following proposition we show that such a deformation exists. The statement of the proposition is in terms of an $\ep$-dependent smooth metric $\overline g_{\ep}$, $\ep>0$, whose eigenvalues will be denoted by~$\overline\la_{k,\ep}$.

\begin{proposition}\label{evalcol2}
There exists a positive $\ep$-dependent function $h_\ep$  on $M$ of class
$C^\infty$, a basis of the eigenfunctions $\overline u_{k,\ep}$
of the Laplacian on $M$ with the metric $\overline g_{\ep}:=h_\ep g_{T}$, where $T:=\ep^{-1}$,
and a basis $v_{k}$ of the Neumann eigenfunctions of the domain $\Om$
(with the metric $g_0$) such that:
\begin{enumerate}
\item $\overline \la_{1,\ep}$ has multiplicity $2$ and $\overline \la_{1,\ep}=\overline \la_{2,\ep}=\mu_{\DD^2}$,
\item $\lim_{\ep\searrow 0}\|\overline u_{1,\ep}-v_{1}\|_{H^1(\Om)}=\lim_{\ep\searrow 0}\|\overline u_{2,\ep}-v_{2}\|_{H^1(\Om)}=0$.
\end{enumerate}
Moreover, $\lim_{\ep\searrow 0}\|h_\ep-1\|_{C^l(M)}=0$ for any integer $l$.
\end{proposition}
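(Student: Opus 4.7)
The plan starts from Proposition~\ref{lamconv}, which already places $\la_{1,T}$ and $\la_{2,T}$ close to the doubly degenerate Neumann eigenvalue $\mu_{\DD^2}$; the remaining task is to kill their (generically nonzero) splitting by a carefully chosen conformal perturbation of $g_T$. Since real symmetric $2\times 2$ matrices with a double eigenvalue form a codimension-two subset, the natural object to vary is a two-parameter family of conformal factors, in the spirit of Colin de Verdi\`ere's constructions of Laplacians with multiple eigenvalues. A final scalar rescaling then pins the common value exactly at $\mu_{\DD^2}$.

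Concretely, I would exploit the explicit angular structure of $v_1 = J_1(\sqrt{\mu_1}r)\cos\te$ and $v_2 = J_1(\sqrt{\mu_1}r)\sin\te$: fix a bump $\psi \in C^\infty_c((0,1))$ supported near some $r_0 \in (0,1)$, and set
\[
\phi_1(s,z) := \psi(r)\cos(2\te), \qquad \phi_2(s,z) := \psi(r)\sin(2\te)
\]
on $\Om$, extended by zero to $M$. For $\al = (\al_1,\al_2) \in \RR^2$ small, consider the trial family
\[
h_{\ep,\al} := c\,(1 + \al_1\phi_1 + \al_2\phi_2), \qquad c = c(\ep,\al) > 0,
\]
with $c$ to be tuned at the last step, and its associated metric $h_{\ep,\al}\,g_T$ on $M$, where $T = \ep^{-1}$.

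Because $\mu_3 > \mu_{\DD^2}$, Proposition~\ref{lamconv} together with a spectral-gap argument shows that, uniformly for $\ep$ and $\al$ small, the two lowest nontrivial eigenvalues of $h_{\ep,\al}\,g_T$ are isolated from the rest of the spectrum and coincide with the eigenvalues of a symmetric $2\times 2$ matrix $M(\ep,\al)$ representing the Laplacian on the corresponding invariant subspace, which is $H^1(\Om)$-close to $\Span\{v_1,v_2\}$. I would then form
\[
F(\ep,\al) := \bigl(M_{11}(\ep,\al) - M_{22}(\ep,\al),\ 2\,M_{12}(\ep,\al)\bigr),
\]
so that a double eigenvalue occurs exactly at the zeros of $F$. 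A Feynman--Hellmann-style first variation shows that, as $\ep\searrow 0$, $\pd M_{ij}/\pd\al_k$ at $\al=0$ tends to an integral of the form $\int_\Om \phi_k\bigl[\tfrac12\nabla v_i\cdot\nabla v_j - \tfrac32\mu_{\DD^2}\,v_iv_j\bigr]\,d\mu_{g_0}$. Using $v_1^2 - v_2^2 = J_1^2\cos(2\te)$ and $2v_1v_2 = J_1^2\sin(2\te)$, together with the analogous identities for $|\nabla v_i|^2$ and $\nabla v_1\cdot\nabla v_2$ (which carry the same $\cos(2\te)$, $\sin(2\te)$ factors in the metric $g_0$), the angular orthogonality of $\cos(2\te)$ and $\sin(2\te)$ on $\SS^1$ makes the limiting $d_\al F$ diagonal, with entries equal to nonzero $r$-integrals of $\psi$ against a fixed Bessel combination; a generic choice of $\psi$ ensures invertibility.

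Proposition~\ref{lamconv} also gives $F(\ep,0) = o(1)$, so a quantitative implicit function theorem yields a smooth branch $\al(\ep) \to 0$ with $F(\ep,\al(\ep))=0$. At such $\al(\ep)$ the Laplacian of $(1+\al(\ep)\cdot\phi)\,g_T$ has a double eigenvalue $\la^*(\ep)\to\mu_{\DD^2}$; setting $c(\ep) := \la^*(\ep)/\mu_{\DD^2}$ in $h_{\ep,\al}$ conformally rescales the metric by a constant, multiplying the double eigenvalue by $c(\ep)^{-1}$ to land it exactly at $\mu_{\DD^2}$ without altering the eigenfunctions. This gives conclusion~(i), while conclusion~(ii) and $\|h_\ep-1\|_{C^l(M)}\to 0$ follow from $\al(\ep),\,c(\ep)-1\to 0$ together with the $H^1(\Om)$-stability of the two-dimensional eigenspace provided by Proposition~\ref{lamconv}. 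The main obstacle I expect in executing this outline is the perturbation-plus-gap step: one must upgrade the merely qualitative $o(1)$ estimates of Proposition~\ref{lamconv} to bounds on $M(\ep,\al)$ that are smooth and uniform in $\al$ in a fixed neighborhood of $0$, so that both $F(\ep,0)$ and the Jacobian $d_\al F(\ep,0)$ can be controlled simultaneously and the non-degeneracy computed in the Neumann limit survives for small $\ep>0$.
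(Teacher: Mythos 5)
Your overall strategy is viable and genuinely different from the paper's in its details, and your limiting Jacobian computation is essentially right: with $\phi_1=\psi(r)\cos2\te$, $\phi_2=\psi(r)\sin2\te$ the angular orthogonality does make the limiting differential of the splitting map diagonal (in fact a multiple of the identity), and a bump $\psi$ concentrated where the radial weight is nonzero gives invertibility; the final constant rescaling to pin the double eigenvalue at $\mu_{\DD^2}$ is also fine. The paper instead perturbs by $h_s=e^{2s_1\psi_1+2s_2\psi_2}$ with $\psi_1\equiv1$ (playing the role of your constant $c$) and $\psi_2$ a global extension of a higher Neumann eigenfunction $v_{k_0}$ chosen so that $\int_\Om(v_1^2-v_2^2)v_{k_0}\neq0$; it targets the two conditions $\la_{1}=\la_{2}=\mu_{\DD^2}$ directly rather than your ``splitting plus rescaling'' normal form, and it computes the needed nondegeneracy through a Hadamard-type variation formula for the \emph{Neumann} eigenvalues $\mu_{k,s}$ of $h_sg_0$ on $\Om$, i.e.\ entirely at the limit problem $\ep=0$.

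The genuine gap is exactly the step you flag, and it is not a minor technicality: your argument closes with a quantitative (Newton--Kantorovich type) implicit function theorem, which requires invertibility of $d_\al F(\ep,0)$ and uniform-in-$\al$ continuity (or Lipschitz) control of $d_\al F(\ep,\al)$ on a fixed ball, \emph{for small $\ep>0$}. Proposition~\ref{lamconv} gives only qualitative $o(1)$ convergence of eigenvalues and eigenfunctions at $\al=0$, and pointwise convergence of $F(\ep,\cdot)$ to the Neumann limit does not by itself give convergence of the $\al$-derivatives (real-analyticity in $\al$ for fixed $\ep$ does not help without locally uniform control, and Feynman--Hellmann expressions for $\pd M_{ij}/\pd\al_k$ at $\ep>0$ involve the $\ep$-dependent eigenfunctions, whose convergence you would have to make uniform over an $\al$-ball). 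So as written, the crucial hypothesis of your IFT is unverified. The paper is engineered precisely to avoid this: it extends the map $\La$ to $\ep=0$ by the Neumann eigenvalues of $h_sg_0$, proves only joint \emph{continuity} of $\La$ at $(0,s_0)$ (by rerunning the proof of Proposition~\ref{lamconv} with reference metric $g_{0,s}$), computes the $s$-derivative only at $(0,0)$ where it is a clean Neumann computation, and then invokes Halkin's implicit function theorem (a degree-theoretic IFT needing no differentiability in $\ep$ and no control of $D_s\La(\ep,s)$ for $\ep>0$), obtaining a branch $R(\ep)\to0$ with $\La(\ep,R(\ep))=0$. Your outline can be repaired in the same spirit: keep your perturbation family and your limit computation, but replace the quantitative IFT by a Brouwer-degree/homotopy argument (or Halkin's theorem) applied to $F(\ep,\cdot)$ on a small fixed ball, for which joint continuity at $\ep=0$ plus invertibility of the \emph{limiting} Jacobian suffices. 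You would also want to say a word about the frame used to define $M(\ep,\al)$ (gauge terms drop out at the limit because $M\to\mu_{\DD^2}\,\mathrm{Id}$) and about selecting, within the exactly degenerate eigenspace, the basis aligned with $\{v_1,v_2\}$ to get conclusion~(ii).
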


The proof of this result is in Section~\ref{S.lemma} and exploits the fact that when $\ep\searrow0$ the eigenvalues $\la_{1,T}$ and $\la_{2,T}$ are very close according to Proposition~\ref{lamconv} (recall that $\mu_1=\mu_2=\mu_{\DD^2}$). Then a Hadamard-type formula for the variation of eigenvalues with respect to metric perturbations allows us to use an implicit function theorem argument to construct the function $h_\ep$.

\subsubsection*{Step 4: Transverse intersection of nodal sets}
Proposition~\ref{evalcol2} ensures that, for any fixed $\de>0$, one
can then choose a small $\ep$ so that the eigenvalue $\overline
\la_{1,\ep}$ of the Laplacian computed with the metric $\overline
g_\ep$ has multiplicity $2$, and that there is a basis $\{\overline
u_{1,\ep},\overline u_{2,\ep}\}$ of the corresponding eigenspace such that
\begin{align}\label{eq:estd}
\|\overline u_{1,\ep}-v_1\|_{H^1(\Om)}<\delta\,,\qquad \|\overline u_{2,\ep}-v_2\|_{H^1(\Om)}<\delta\,.
\end{align}
Here $\{v_1,v_2\}$ is a basis of Neumann eigenfunctions of the domain $\Om$.

Since the conformal factor $h_\ep$ defining the metric $\overline g_\ep$ in Proposition~\ref{evalcol2} is arbitrarily close to $1$ as $\ep\searrow 0$, the definition of the metric $g_t$ implies that 
$$\|\overline g_\ep-g_0\|_{C^l(S)}<C_l\delta$$
for any integer $l$ and any compact subset $S\subset \Om$, where the constant $C_l$ depends on $l$ and $S$. Therefore, standard elliptic estimates allow us to promote the $H^1$ bound~\eqref{eq:estd} to a $C^1$ bound:
\begin{align}\label{eq:estc1}
\|\overline u_{1,\ep}-v_1\|_{C^1(S)}<C\delta\,,\qquad \|\overline u_{2,\ep}-v_2\|_{C^1(S)}<C\delta\,.
\end{align}

For small enough~$\de$ (and hence small $\ep$), the transversality
property~\eqref{rank} allows us to apply Thom's isotopy
theorem~\cite[Section 20.2]{AR} to conclude that the nodal set
$\overline u_{1,\ep}^{-1}(0)\cap \overline u_{2,\ep}^{-1}(0)$ has a
connected component in $\Om$ diffeomorphic to 
\[
\ga=v_1^{-1}(0)\cap v_2^{-1}(0)\,,
\]
and that the corresponding diffeomorphism $\Phi_\ep:M\to M$ can be chosen arbitrarily close to the identity in $C^1(M)$ and different from the identity only in a small neighborhood of $\ga$.

Finally, let us consider the pulled-back metric
\[
g:=\Phi_\ep^*\overline g_{\ep}
\]
and call
\begin{align*}
u_1:=\overline u_{1,\ep}\circ \Phi_\ep\,,\qquad u_2:=\overline u_{2,\ep}\circ \Phi_\ep
\end{align*}
the associated eigenfunctions, whose corresponding eigenvalue $\la_1:=\overline \la_{1,\ep}$ has multiplicity $2$. Accordingly, the nodal set $u_1^{-1}(0)\cap u_2^{-1}(0)$ has a connected component given by the curve $\ga$, which is structurally stable because Eq.~\eqref{rank} and the $C^1$ estimates~\eqref{eq:estc1} imply
\begin{equation*}
\rank(\nabla u_1(x),\nabla u_2(x))=2
\end{equation*}
for all $x\in\ga$. The theorem then follows.

\begin{remark}\label{R:1}
The transversality property~\eqref{rank} of the intersection of the
nodal sets $v_1^{-1}(0)$ and $v_2^{-1}(0)$ is key in the proof, as in
particular it allows us to use Thom's isotopy theorem in Step~4.
\end{remark}

\section{Proof of Proposition~\ref{lamconv}}\label{S:prop}

Since the family of smooth functions $f_t$ converges pointwise to the discontinuous function $f$ as $t\to\infty$, it is convenient to work first with the discontinuous metric 
\begin{equation*}
g_{\epsilon}:=f g_0=\begin{cases}
\ep g_0& \text{if }x\in \Om^c\,,\\
g_0& \text{if }x\in \Om\,,
\end{cases}
\end{equation*}
and later we will prove the desired result for the metric $g_t$ provided that $t$ is large enough.

To define the spectrum of the Laplacian associated to the metric $g_\ep$, we use the quadratic form
\begin{equation*}
Q_{\epsilon}(\vp):=\int_M|d\vp|_{\epsilon}^2\,dV_{\epsilon}=\int_{\Omega}|d\vp|^2+
\epsilon^{\frac{1}{2}}\int_{\Omega^c}|d\vp|^2\,,
\end{equation*}
and the natural $L^2$ norm corresponding to the metric $g_\ep$:
\begin{align*}
\|\vp\|_\ep^2&:=\int_M \vp^2\, dV_\ep=\int_\Om \vp^2+ \ep^{\frac 32}\int_\Omc \vp^2\,.
\end{align*}
Here the subscripts~$\ep$ refer to quantities computed with respect to the
metric $g_\ep$ and we are omitting the subscripts (and indeed the
measure in the integrals) when the quantities correspond to the
reference metric $g_0$. As is well known, the domain of the quadratic
form $Q_\ep$ can be taken to be the Sobolev space $H^1(M)$. (Recall
that, $M$ being compact, this Sobolev space is independent of the
smooth metric one uses to define it).

By the min-max principle, the $k^{\mathrm{th}}$ eigenvalue
$\la_{k,\ep}$ of this quadratic form is
\begin{equation}\label{minmax}
\la_{k,\ep}=\inf_{W\in\cW_k} \max_{\vp\in W\minus\{0\}} q_\ep(\vp)\,,
\end{equation}
where $\cW_k$ stands for the set of $(k+1)$-dimensional linear subspaces
of $H^1(M)$ and
\begin{equation*}\label{qep}
q_\ep(\vp):=\frac{Q_\ep(\vp)}{\|\vp\|_\ep^2}
\end{equation*}
is the Rayleigh quotient. The $k^{\mathrm{th}}$ eigenfunction $u_{k,\ep}$ is
then a minimizer of the above variational problem for
$\la_{k,\ep}$, in the sense that any subspace that minimizes the variational
problem can be written as $\Span\{ u_{0,\ep},\dots, u_{k,\ep}\}$.

In what follows we fix an integer $k_0$. Let us first prove that the eigenvalues $\la_{k,\ep}$ are almost upper bounded by $\mu_k$ for $k\leq k_0$ in the sense that
\begin{equation}\label{eq:up}
\la_{k,\ep}\leq \mu_k+O(\ep^{\frac{1}{2}})\,,
\end{equation} 
where $O(\epsilon^{m})$ stands for a quantity that is bounded by
$C\ep^{m}$. For this purpose, we consider the harmonic extension to the whole manifold $M$ of the Neumann eigenfunctions $v_k$ of the domain $\Omega$:
\begin{equation*}
\psi_k:=\left\{\begin{array}{ll}\hat v_k & \text{in }{\Omega^c}\,,\\ {v}_k & \text{in }\Omega\,,\end{array}\right.
\end{equation*}
where $\hat{v}_k$ is the solution to the boundary value problem
\begin{equation*}
\Delta\hat{v}_k=0\text{ in }\Omega^c,\ \ \hat{v}_k|_{\partial\Omega}=v_k|_{\partial\Omega}\,.
\end{equation*}
Recall that $\Delta$ is the Laplacian computed with the reference metric $g_0$. Throughout this section we shall normalize the Neumann eigenfunctions $v_k$ so that they define an orthonormal basis of $L^2(\Om)$. Standard elliptic estimates then give us the bound
\begin{equation}\label{harmbound}
\int_{\Omega^c}|d\hat{v}_k^2|+\int_{\Omega^c}\hat{v}_k^2\leq C\|v_k\|^2_{H^{\frac{1}{2}}\left(\partial\Omega\right)}\leq C\|v_k\|_{H^1(\Omega)}^2=C(\mu_k+1)\,,
\end{equation}
where $C$ is a constant independent of $\epsilon$ and $k$. 

For any linear combination $\vp,\tilde{\vp}\in\text{span}\{\psi_0,\ldots,\psi_{k}\}$, with $k\leq k_0$, using Eq.~\eqref{harmbound} we obtain that
\begin{align*}
\int_M\vp\tilde{\vp}\,dV_{\epsilon}=& \int_{\Omega}\vp\tilde{\vp}+\epsilon^{\frac{3}{2}}\int_{\Omega^c}\vp\tilde{\vp}\nonumber\\
=&\int_{\Omega}\vp\tilde{\vp}+O(\epsilon^{\frac{3}{2}})(\mu_{k_0}+1)\|\vp\|_{L^2(\Omega)}\|\tilde{\vp}\|_{L^2(\Omega)},\nonumber\\
=&\int_{\Omega}\vp\tilde{\vp}+O(\epsilon^{\frac{3}{2}})\|\vp\|_{L^2(\Omega)}\|\tilde{\vp}\|_{L^2(\Omega)},
\end{align*}
where, to pass to the last line, we have absorbed the constant $\mu_{k_0}+1$ in the $O(\ep^{\frac{3}{2}})$ term. Accordingly, for sufficiently small $\epsilon$, the linear space
\begin{equation*}
\text{span}\{\psi_0,\ldots,\psi_k\},
\end{equation*}
is a $(k+1)$-dimensional subspace of $H^1(M)$ provided that $k\leq k_0$. In a similar manner we have
\begin{align*}
Q_{\epsilon}(\vp)=& \int_{\Omega}|d\vp|^2+ \epsilon^{\frac{1}{2}} \int_{\Omega^c}|d\vp|^2\nonumber\\
=&\int_{\Omega}|d\vp|^2+O(\epsilon^{\frac{1}{2}})\|\vp\|_{L^2(\Omega)}^2\,.
\end{align*}
Therefore, for $k\leq k_0$ we can estimate $\la_{k,\ep}$ as
\begin{align*}
\lambda_{k,\ep}\leq&\max_{\vp\in\{\psi_0,\ldots,\psi_k\}}\frac{Q_{\epsilon}(\vp)}{\|\vp\|_\ep^2}\\
\leq& \max_{\vp\in\{\psi_0,\ldots,\psi_k\}}\frac{\int_{\Omega}|d \vp|^2+O(\epsilon^{\frac{1}{2}})\|\vp\|_{L^2(\Omega)}^2}{\|\vp\|_{L^2(\Omega)}^2}\\
=&\max_{\chi\in\{v_0,\ldots,v_k\}}\frac{\int_{\Omega}| d\chi|^2}{\|\chi\|_{L^2(\Omega)}^2} +O(\epsilon^{\frac{1}{2}})\\
=&\mu_k +O(\epsilon^{\frac{1}{2}})\,,
\end{align*}
which proves the upper bound~\eqref{eq:up}.

Now we proceed to prove the lower bound
\begin{equation}\label{low}
\la_{k,\ep}\geq\left(1+O(\epsilon^{\frac{1}{2}})\right)\mu_k
\end{equation} 
for all $k\leq k_0$. To obtain this bound we split $H^1(M)$ into two closed linear subspaces
\begin{align*}
&\mathcal{H}_1:=\{\vp\in H^1(M):\Delta\vp=0\text{ in }\Omega^c\}\,,\ \\ &\mathcal{H}_2:=\{\vp\in H^1(M):\vp|_{\Omega^c}\in H^1_0(\Omega^c),\ \vp|_{\Omega}=0\}\,,
\end{align*}
so that $H^1(M)=\mathcal{H}_1\oplus\mathcal{H}_2$ and
\begin{equation*}
\int_M g_0(\nabla\vp_1,\nabla\vp_2)=0,
\end{equation*}
for all $\vp_j\in\mathcal{H}_j$. We will write $\vp_j$ for the component of a function $\vp\in H^1(M)$ in $\mathcal{H}_j$. 

We first note that by choosing a sufficiently small $\epsilon$, the upper bound~\eqref{eq:up} implies that $\lambda_{k,\ep}<\mu_{k_0}+1$ for all $k\leq k_0$, which allows us to reduce the min-max formula~\eqref{minmax} for $k\leq k_0$ to
\begin{equation}\label{redminmax}
\lambda_{k,\ep}=\inf_{W\in\mathcal{W}'_{k}}\max_{\vp\in W\backslash\{0\}}q_\ep(\varphi)\,,
\end{equation}
where $\mathcal{W}'_{k}$ is the set of $(k+1)$-dimensional subspaces of $H^1(M)$ such that
\begin{equation*}
q_\ep(\varphi)<\mu_{k_0}+1
\end{equation*}
for all nonzero $\vp\in W\subset \mathcal W'_k$.

A simple computation shows that
\begin{equation}\label{estp2}
\|\vp_2\|_{\epsilon}^2=O(\ep)\|\vp\|_{\epsilon}^2\,,
\end{equation}
for any $\vp\in W\subset \mathcal W'_k$ with $k\leq k_0$. Indeed, this relation follows from the estimate 
\begin{align*}
\mu_{k_0}+1>q_\ep(\vp)=\frac{\|\vp_2\|_\ep^2}{\|\vp\|^2_\ep}\frac{Q_\ep(\vp)}{\|\vp_2\|_\ep^2}\geq \frac{1}{\ep}\frac{\|\vp_2\|_\ep^2}{\|\vp\|^2_\ep}\frac{\int_{\Om^c}|d\vp_2|^2}{\int_{\Om^c}\vp_2^2}
\geq \frac{\la_{\Om^c}}{\ep}\frac{\|\vp_2\|_\ep^2}{\|\vp\|^2_\ep}\,,
\end{align*}
where the $\ep$-independent constant $\la_{\Om^c}$ is the first Dirichlet eigenvalue of $\Om^c$. 
In particular, Eq.~\eqref{estp2} implies that
\begin{equation}\label{eqc}
\|\vp\|_{\epsilon}^2=\left(1+O(\epsilon^{\frac{1}{2}})\right)\|\vp_1\|_{\epsilon}^2\,.
\end{equation} 

Proceeding as in the proof of Eq.~\eqref{harmbound} and using the bound~\eqref{eqc}, we obtain that for $\vp\in W\subset\mathcal W'_k$ with $k\leq k_0$,
\begin{equation*}
\int_{\Omega^c}\vp_1^2+\int_{\Omega^c}|d\vp_1|^2\leq C\int_{\Omega}\vp_1^2\,.
\end{equation*}

Combining these two results we conclude that, 
\begin{align*}
q_\ep(\vp)=&\left(1+O(\epsilon^{\frac{1}{2}})\right)\frac{\int_{M}|d \vp_1|_{\epsilon}^2\,dV_{\epsilon} +\int_{M}|d \vp_2|_{\epsilon}^2\,dV_{\epsilon}}{\|\vp_1\|_{\epsilon}^2}\\
\geq& \left(1+O(\epsilon^{\frac{1}{2}})\right)\frac{\int_{\Omega}|d \vp_1|^2}{\int_{\Omega}\vp_1^2 +\epsilon^{\frac{3}{2}}\int_{\Omega^c}\vp_1^2}\\
=& \left(1+O(\epsilon^{\frac{1}{2}})\right)\frac{\int_{\Omega}|d \vp_1|^2}{\int_{\Omega}\vp_1^2}\,,
\end{align*}
for any $\vp\in W\subset \mathcal W'_k$ with $k\leq k_0$. Therefore by the min-max principle~\eqref{redminmax} we have
\begin{align*}
\lambda_{k,\ep}\geq& \left(1+O(\epsilon^{\frac{1}{2}})\right)\inf_{W\in\mathcal W'_k}\max_{\vp\in W\backslash\{0\}}\frac{\int_{\Omega}|d \vp_1|^2}{\int_{\Omega}\vp_1^2}\\
\geq&\left(1+O(\epsilon^{\frac{1}{2}})\right)\mu_k,
\end{align*}
for all $k\leq k_0$, which establishes the lower bound~\eqref{low}.

The bounds~\eqref{eq:up} and~\eqref{low} imply that
\begin{equation}\label{conveig}
\lim_{\ep\searrow 0}\|\la_{k,\ep}-\mu_k\|=0
\end{equation}
for all $k\leq k_0$. Since the metric $g_\ep$ is equal to $g_0$ in the domain $\Om$, and the eigenvalues control the $H^1$ norm, it is standard that the convergence of eigenvalues~\eqref{conveig} implies that for $k\leq k_0$, there exists a basis of eigenfunctions $u_{k,\ep}$ and a basis of Neumann eigenfunctions $v_k$ such that
\begin{equation}\label{conveigenf}
\lim_{\ep\searrow 0}\|u_{k,\ep}-v_k\|_{H^1(\Om)}=0\,.
\end{equation}

To finish the proof of the proposition, recall that the family of smooth metrics $g_t$ converges pointwise to $g_\ep$. It is then well known (see e.g.~\cite{BU83}) that
the eigenvalue $\la_{k,t}$ converges to $\la_{k,\ep}$ and that there is a basis of eigenfunctions $u_{k,t}$ converging in $H^1(M)$ to a basis of eigenfunctions $u_{k,\ep}$ as
$t\to\infty$. Combining this result with equations~\eqref{conveig} and~\eqref{conveigenf}, the proposition follows.

\section{Proof of Proposition~\ref{evalcol2}}\label{S.lemma}
In order to construct the conformal factor $h_\ep$, we introduce an
auxiliary smooth function $h_s$ depending on two real parameters $s\equiv (s_1,s_2)\in\RR^2$ that has the form
$$
h_s=e^{2s_1\psi_1+2s_2\psi_2}\,,
$$
where $\psi_1$ and $\psi_2$ are smooth functions with zero normal derivative on $\partial\Om$ that will be fixed later. We denote by $\la_{k,\ep,s}$ the eigenvalues of the smooth metric 
$$
g_{\ep,s}:=h_sg_{T}\,,
$$
where $T:=\ep^{-1}$, and by $u_{k,\ep,s}$ an orthonormal basis of the corresponding eigenfunctions. By construction we have that $g_{\ep,s}$ is analytic in the parameters $\ep>0$ and $s\in\RR^2$, and that $g_{\ep,0}=g_{T}$. We also denote by $\mu_{k,s}$ the Neumann eigenvalues of the Laplacian associated with the smooth metric
$$
g_{0,s}:=h_sg_0
$$
in the domain $\Om$, and by $v_{k,s}$ an orthonormal basis of the corresponding eigenfunctions. By the Kato--Rellich theorem, since the Laplacian for the metric $g_{\ep,s}$ is analytic in the parameters $s\in\RR^2$ and $\ep>0$, it is well known that we can take the families of eigenvalues $\la_{k,\ep,s}$ and eigenfunctions $u_{k,\ep,s}$ to vary analytically in $s$ and $\ep$ (this result is independent of the multiplicities of the eigenvalues). The same theorem also implies that the Neumann eigenvalues $\mu_{k,s}$ and eigenfunctions $v_{k,s}$ are analytic in $s$, and in particular
\begin{equation}\label{eqns1}
\lim_{s\to 0}|\mu_{k,s}-\mu_k|=0\,,\qquad \lim_{s\to 0}\|v_{k,s}-v_k\|_{H^1(\Om)}=0\,.
\end{equation}

To construct the desired function $h_\ep$ we will apply a version of the implicit function theorem to the map $\La:\RR^+\times\RR^2\rightarrow\RR^2$ given by
\begin{equation*}
\La(\ep,s):=\begin{cases}
(\la_{1,\ep,s}-\mu_{\DD^2},\la_{2,\ep,s}-\mu_{\DD^2})& \text{if }\ep>0\,,\\
(\mu_{1,s}-\mu_{\DD^2},\mu_{2,s}-\mu_{\DD^2})& \text{if }\ep=0\,.
\end{cases}
\end{equation*}
From the analytic dependence of the eigenvalues with $\ep$ and $s$, we
have that $\La(\ep,s)$ depends analytically on the variable~$\ep\in
(0,\infty)$ for any fixed $s\in\RR^2$ and also on the variable
$s\in\RR^2$ for any fixed $\ep\in[0,\infty)$. In particular, $\La$ is differentiable in its second variable when the first is fixed. Moreover, $\La$ is
continuous in both variables at the point $(0,s_0)$ for any
$s_0\in\RR^2$ and $\La(0,0)=(0,0)$. This is proved by proceeding exactly
as in the proof of Proposition~\ref{lamconv}, cf.~Section~\ref{S:prop}, to show that
$$
\lim_{\ep\searrow0}|\la_{k,\ep,s}-\mu_{k,s}|=0\,,
$$
and
\begin{equation}\label{eqns2}
\lim_{\ep\searrow0}\|u_{k,\ep,s}-v_{k,s}\|_{H^1(\Om)}=0\,,
\end{equation}
for any fixed $s\in\RR^2$ (in the proof one only has to change the reference metric $g_0$ by the metric $g_{0,s}$). Since this property does not depend on the particular way that the point $(\ep,s)$ tends to $(0,s_0)$, we conclude that the map $\La$ is continuous on its domain. 

With the aim of using an implicit function theorem, we now need to evaluate the derivative of $\La$ with respect to~$s$ at the point $(0,0)$. To do this we first calculate the derivative of the Laplacian $\Delta_s$, computed with the metric $g_{0,s}$, at the point $s=0$, acting on the Neumann eigenfunction $v_k$. Noticing that $\Delta_sv_k$ reads as
\begin{align*}
\Delta_sv_k=\frac12h_s^{-2}g_0(\nabla h_s,\nabla v_k)-\mu_kh_s^{-1}v_k\,,
\end{align*}
where $\nabla$ is computed with the reference metric $g_0$, a straightforward computation taking derivatives of this expression with respect to the parameter $s_i$, $i\in\{1,2\}$, and using the form of the conformal factor $h_s$, shows that
\begin{equation}\label{eqdelts}
D_{s_i}\Delta_s(0)v_k=g_0(\nabla \psi_i,\nabla v_k)+2\mu_k\psi_iv_k\,.
\end{equation}

Now we can take derivatives of the eigenvalue equation $\De_s v_{k,s}=-\mu_{k,s}v_{k,s}$ with respect to $s_i$ (recall that the families of eigenfunctions $v_{k,s}$ and $\mu_{k,s}$ were chosen to be analytic with respect to $s$), and evaluate at $s=0$. If we multiply the resulting expression by $v_k$ and integrate over the domain $\Om$ (with respect to the measure induced by $g_0$, which we omit for the sake of simplicity) we easily get
\begin{equation}\label{ders}
D_{s_i}\mu_{k,s}(0)=-\int_{\Om}v_kD_{s_i}\Delta_s(0)v_k-\int_{\Om}v_k\Delta(D_{s_i}v_{k,s}(0))-\mu_k\int_\Om v_kD_{s_i}v_{k,s}(0)\,.
\end{equation}
To obtain this formula we have used that $\int_{\Om}v_k^2=1$. 

Noticing that the unit vector field $\nu_s$ orthogonal to the boundary $\partial\Om$ with respect to the metric $g_{0,s}$ is given by
$$
\nu_s=h_{s}^{-\frac12}\nu\,,
$$
with $\nu$ the unit normal field of $\partial\Om$ computed with $g_0$, we infer that the Neumann eigenfunction $v_{k,s}$ also satisfies the Neumann boundary condition with respect to the normal field $\nu$, that is, $\partial_\nu v_{k,s}=0$. In particular, taking derivatives with respect to $s_i$ in this expression we get $\partial_\nu(D_{s_i}v_{k,s}(0))=0$. 

Accordingly, the $D_{s_i}v_{k,s}(0)$ terms in Eq.~\eqref{ders} cancel after integration by parts, using that $\Delta v_k=-\mu_kv_k$. Finally, putting together Eqs.~~\eqref{eqdelts} and~\eqref{ders} we obtain a Hadamard-type formula for the variation of the Neumann eigenvalue $\mu_{k,s}$:
\begin{align*}
D_{s_i}\mu_{k,s}(0)&=-\int_{\Om}v_kg_0(\nabla \psi_i,\nabla v_k)-2\mu_k\int_\Om\psi_iv_k^2\\&=\frac{1}{2}\int_{\Om}v_k^2(\Delta \psi_i-4\mu_k\psi_i)\,.
\end{align*}
Here we have integrated by parts and used that $\partial_\nu\psi_i=0$, by assumption, in order to pass to the second equality.

This equation and the fact that $\mu_1=\mu_2=\mu_{\DD^2}$ imply that the expression for the derivative $D_s\La(0,0)$ is:
$$
D_s\La(0,0)=\frac12\left(\begin{array}{ll}\int_{\Om}v_1^2(\De \psi_1-4\mu_{\DD^2}\psi_1) & \int_{\Om}v_1^2(\De \psi_2-4\mu_{\DD^2}\psi_2)\\ \int_{\Om}v_2^2(\De \psi_1-4\mu_{\DD^2}\psi_1) & \int_{\Om}v_2^2(\De \psi_2-4\mu_{\DD^2}\psi_2)\end{array}\right)\,.
$$

In order to apply an implicit function theorem to the map $\La$, in the following lemma we compute the rank of the matrix $D_s\La(0,0)$ for a suitable choice of the functions $\psi_1,\psi_2$. The lemma is stated in terms of a function $\hat v_{k_0}$ that is any smooth extension to the whole manifold $M$ of the Neumann eigenfunction $v_{k_0}$ of the domain~$\Om$.
\begin{lemma}\label{deff}
There exists a positive integer $k_0$ such that 
$$
\mathrm{rank}\,[D_s\La(0,0)]=2
$$
with the choice $\psi_1=1$ and $\psi_2=\hat v_{k_0}$. 
\end{lemma}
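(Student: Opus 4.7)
My plan is to compute the two columns of the $2\times 2$ matrix $D_s\La(0,0)$ explicitly for the proposed choice of $\psi_1$ and $\psi_2$, and then reduce the rank $2$ condition to a single scalar nonvanishing condition that will be secured by completeness of the Neumann eigenbasis.

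Using the Hadamard-type formula displayed just before the lemma, the choice $\psi_1=1$ (for which $\Delta\psi_1=0$) together with the $L^2(\Om)$-normalization $\int_\Om v_1^2=\int_\Om v_2^2=1$ makes the first column of $D_s\La(0,0)$ equal to $-2\mu_{\DD^2}(1,1)^T$. For the second column, since $\hat v_{k_0}|_\Om=v_{k_0}$ and $\Delta v_{k_0}=-\mu_{k_0}v_{k_0}$ in $\Om$, the same formula gives
\[
D_{s_2}\La(0,0)=-\tfrac{\mu_{k_0}+4\mu_{\DD^2}}{2}\Bigl(\textstyle\int_\Om v_1^2\, v_{k_0},\ \int_\Om v_2^2\, v_{k_0}\Bigr)^T.
\]
The scalar factor is strictly positive, so rank $2$ is equivalent to linear independence of the column $(1,1)^T$ and the vector on the right, i.e., to the scalar condition
\[
I(k_0):=\int_\Om (v_1^2-v_2^2)\,v_{k_0}\neq 0.
\]

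To produce such a $k_0$, I would use the explicit description of the Neumann basis given in Step~1 of Section~\ref{S.main}: for the metric $g_0|_\Om=A^{-1}ds^2+|dz|^2$ on $\Om\cong \SS^1\times\DD^2$, separation of variables gives $v_1=J_1(\sqrt{\mu_1}\,r)\cos\te$ and $v_2=J_1(\sqrt{\mu_1}\,r)\sin\te$, so that
\[
v_1^2-v_2^2=J_1(\sqrt{\mu_1}\,r)^2\cos(2\te),
\]
which is a nonzero element of $L^2(\Om)$ because $J_1(\sqrt{\mu_1}r)$ does not vanish on $0<r<1$. Since $\{v_k\}_{k\geq 0}$ is a complete orthonormal basis of $L^2(\Om)$, Parseval's identity forces $I(k_0)\neq 0$ for some index $k_0\geq 0$; the index $k_0=0$ is automatically excluded because $v_0$ is a nonzero constant and the normalization $\int_\Om v_1^2=\int_\Om v_2^2$ makes $I(0)=0$, so in fact $k_0\geq 1$.

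The only technical point I would flag is compatibility with the standing hypothesis $\pd_\nu\psi_i=0$ on $\pd\Om$ used in deriving the Hadamard formula. This is immediate for $\psi_1=1$, and for $\psi_2$ it holds because the Neumann eigenfunction $v_{k_0}$ already satisfies $\pd_\nu v_{k_0}=0$ on $\pd\Om$, so one can pick the smooth extension $\hat v_{k_0}$ to $M$ with vanishing normal derivative there. I do not anticipate a genuine obstacle: the argument is a direct calculation combined with completeness of the Neumann basis, and the mildly delicate point is simply recognizing that the difference $v_1^2-v_2^2$ does not vanish identically, which is transparent from the explicit formulas.
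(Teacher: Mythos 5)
Your proposal is correct and follows essentially the same route as the paper: compute the matrix entries from the Hadamard formula, reduce rank~$2$ to the condition $\int_\Om(v_1^2-v_2^2)v_{k_0}\neq 0$, and obtain such a $k_0$ from completeness of the Neumann basis in $L^2(\Om)$. The only (harmless) differences are that you verify $v_1^2-v_2^2\not\equiv 0$ via the explicit Bessel-function formula $J_1(\sqrt{\mu_1}\,r)^2\cos(2\te)$ where the paper merely invokes non-proportionality of $v_1$ and $v_2$, and that you explicitly rule out $k_0=0$, a point the paper leaves implicit.
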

\begin{proof}
First notice that, with this choice, the functions $\psi_1$ and $\psi_2$ satisfy the Neumann condition $\partial_\nu\psi_i=0$ on $\partial\Om$. Since $\int_\Om v_k^2=1$ by assumption (we are considering an orthonormal basis), the derivative $D_s\La(0,0)$ reads as
$$
D_s\La(0,0)=-\frac12\left(\begin{array}{ll}4\mu_{\DD^2} & (\mu_{k_0}+4\mu_{\DD^2})\int_{\Om}v_1^2v_{k_0}\\ 4\mu_{\DD^2} & (\mu_{k_0}+4\mu_{\DD^2})\int_{\Om}v_2^2v_{k_0}\end{array}\right)\,.
$$
The problem then reduces to finding a Neumann eigenfunction $v_{k_0}$ such that
$$
\int_{\Om}(v_1^2-v_2^2)v_{k_0}\neq 0\,.
$$
It is obvious that such eigenfunction exists because the set of Neumann eigenfunctions $\{v_k\}_{k=0}^\infty$ spans $L^2(\Om)$ and the quantity $v_1^2-v_2^2$ is not identically zero due to the fact that $v_1$ and $v_2$ are not proportional. This proves the lemma.
\end{proof}

We can now use the implicit function theorem~\cite[Theorem B]{Halkin} to obtain a constant $\ep_0>0$, an open neighborhood $U$ of $0\in\RR^2$ and a function $R:[0,\ep_0)\to U$ such that:
\begin{itemize}
	\item $R(0)=0$,
	\item $\La(\ep,R(\ep))=\La(0,0)=0$ for all $\ep\in [0,\ep_0)$, and
	\item $R$ is continuous at $\ep=0$, so in particular $\lim_{\ep\searrow0}R(\ep)=0$.
\end{itemize}

We can define the desired conformal factor $h_\ep$ as 
$$h_{\ep}:=h_{R(\ep)}\,,$$ 
so that the continuity of $R$ implies that $\lim_{\ep\searrow0}\|h_{\ep}-1\|_{C^l(M)}=0$ for any integer~$l$. With this definition of $h_\ep$ we have that the metric $\overline g_\ep$ and the eigenvalues $\overline \la_{k,\ep}$ in the statement of the proposition are given by
\begin{align*}
\overline g_\ep =g_{\ep,R(\ep)}\,,\qquad \overline \la_{k,\ep}=\la_{k,\ep,R(\ep)}\,.
\end{align*}

Accordingly, since $\La(\ep,R(\ep))=0$ for $\ep\in[0,\ep_0)$, we conclude that $$\overline\la_{1,\ep}=\overline\la_{2,\ep}=\mu_{\DD^2}$$
provided that $\ep>0$ is small enough. The multiplicity of this eigenvalue is exactly~$2$ because $\lim_{\ep\searrow 0}|\overline\la_{3,\ep}-\mu_3|=0$ and $\mu_3>\mu_{\DD^2}$.

Concerning the eigenfunctions $\overline u_{k,\ep}$ associated to the eigenvalues $\overline \la_{k,\ep}$, we observe that
$$
\overline u_{k,\ep}=u_{k,\ep,R(\ep)}\,,
$$
and therefore Eqs.~\eqref{eqns1} and~\eqref{eqns2} imply
$$
\lim_{\ep\searrow 0}\|\overline u_{1,\ep}-v_{1}\|_{H^1(\Om)}=\lim_{\ep\searrow 0}\|\overline u_{2,\ep}-v_{2}\|_{H^1(\Om)}=0\,,
$$
thus completing the proof of the proposition.

\section{Higher dimensional analogs of the main theorem}
\label{S.higher}

In Theorem~\ref{T.main} we have shown the existence of a Riemannian metric for any closed $3$-manifold whose first nontrivial
eigenvalue has multiplicity $2$ (i.e., $\la_1=\la_2\neq\la_3$) and a
prescribed component of codimension $2$ (that is, a knot) in the nodal
set $u^{-1}(0)$ of the complex-valued eigenfunction $u=u_1+i
u_2$. In this section we shall show that essentially the same argument
enables us to construct a metric on any closed $d$-dimensional
manifold, whose first nontrivial eigenvalue has multiplicity $m\geq 2$
and a prescribed component of codimension $m$ in the set 
\[
u_1^{-1}(0)\cap\cdots\cap u_m^{-1}(0)\,.
\]

However, a technical condition makes the statement
considerably more involved in the general case. This condition is
associated with the crucial requirement that the nodal set be structurally
stable. In the situation covered by the main theorem, the structural
stability follows from the important equation~\eqref{rank}, which
plays a crucial role in the proof (see Remark~\ref{R:1}). The higher dimensional analog of that
relation is the requirement that the intersection of the nodal sets $u_1^{-1}(0),\ldots,u_m^{-1}(0)$ is transverse on $\Sigma\subset\cap_{k=1}^m u_k^{-1}(0)$, that is,
\begin{equation}\label{trans}
\text{rank}\Big(\nabla u_1(x),\ldots,\nabla u_m(x)\Big)=m
\end{equation}
for all $x\in \Si$.

We shall next state the higher dimensional analog of
Theorem~\ref{T.main}. For this, let $\Si$ be a compact embedded submanifold of a
$d$-dimensional manifold $M$. Throughout we will assume that $\Si$ has
codimension $m\geq 2$ and is connected and boundaryless. Our result
shows that $\Si$ is a connected component of the transverse
intersection of $m$ independent eigenfunctions corresponding to the
same eigenvalue $\la_1$ of the Laplacian on $M$ for some metric. For
the transversality condition~\eqref{trans} to hold, a topological
obstruction on $\Si$ is that its normal bundle must be
trivial. Geometrically, this is equivalent to the assertion that a
small tubular neighborhood of the submanifold~$\Si$ must be
diffeomorphic to $\Si\times\DD^m$, where $\DD^m$ denotes the unit
$m$-dimensional disk. Since a knot always has trivial normal
bundle~\cite{Ma59}, Theorem~\ref{T.main} corresponds exactly to the
case $d=3$ and $m=2$. The case $m=1$, i.e. codimension one
hypersurfaces embedded in $M$, was covered in~\cite{EPpre}.

\begin{theorem}\label{T.high}
Let $\Si$ be a codimension $m$ compact
submanifold of $M$ with trivial normal bundle, $m\geq 2$. Then there
exists a Riemannian metric $g$ on $M$ such that its first nontrivial
eigenvalue has multiplicity $m$ (i.e., $\la_1=\cdots=\la_m\neq \la_{m+1}$) and $\Sigma$ is a connected component of the transverse intersection of the nodal sets $u_1^{-1}(0)\cap\cdots\cap u_m^{-1}(0)$. Here $\{u_1,\cdots,u_m\}$ is any basis of the eigenspace corresponding to $\la_1$. 
\end{theorem}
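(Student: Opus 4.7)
The plan is to adapt the four-step strategy of Section~\ref{S.main} to codimension $m$. Since $\Si$ has trivial normal bundle, a tubular neighborhood $\Om$ of $\Si$ is diffeomorphic to $\Si\times\DD^m$, and I parametrize it by coordinates $(s,z)$ with $z\in\DD^m$. I fix an auxiliary metric $g_\Si$ on $\Si$ and choose a reference metric $g_0$ on $M$ whose restriction to $\Om$ has the product form $g_0|_\Om = A^{-1}g_\Si + |dz|^2$, with $A$ large enough that $A\alpha_1(\Si) > \mu_{\DD^m}$, where $\alpha_1(\Si)$ is the first nontrivial Neumann eigenvalue of $(\Si, g_\Si)$ and $\mu_{\DD^m}$ is the first nontrivial Neumann eigenvalue of the unit $m$-disk. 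Separation of variables then shows that $\mu_{\DD^m}$ is itself the first nontrivial Neumann eigenvalue of $\Om$, with multiplicity exactly $m$. An explicit orthogonal basis is given (in polar coordinates on $\DD^m$) by a radial Bessel-type profile $R(r)$ times the degree-one spherical harmonics $z_k/r$; these behave as $v_k = c\,z_k + O(|z|^2)$ near $z=0$, so that $\bigcap_{k=1}^m v_k^{-1}(0) = \Si$ and the transversality condition~\eqref{trans} holds along $\Si$ for any basis of the eigenspace.

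Step~2 is Proposition~\ref{lamconv} applied verbatim: its proof depends only on min-max, the product structure of the Sobolev spaces, and elliptic estimates, and is insensitive to the ambient dimension or codimension. It produces a family of smooth metrics $g_t = f_t g_0$ whose first $m$ nontrivial eigenvalues converge to $\mu_{\DD^m}$ and whose eigenfunctions converge in $H^1(\Om)$ to a basis of the Neumann eigenspace. For Step~3, I introduce an $m$-parameter family of conformal factors $h_s = \exp(2\sum_{i=1}^m s_i\psi_i)$ with $\pd_\nu\psi_i = 0$ on $\pd\Om$, form the metrics $g_{\ep,s}=h_s g_T$ with $T=\ep^{-1}$, and study the map $\La:\RR^+\times\RR^m\to\RR^m$ whose $k$-th component is $\la_{k,\ep,s}-\mu_{\DD^m}$. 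The Hadamard computation of Section~\ref{S.lemma} extends unchanged to give
\[
D_{s_i}\mu_{k,s}(0)=\tfrac12\int_\Om v_k^2(\De\psi_i-4\mu_{\DD^m}\psi_i),
\]
reducing the implicit function argument to producing $\psi_1,\ldots,\psi_m$ for which the $m\times m$ matrix $D_s\La(0,0)$ is invertible.

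This last linear-algebra step is the main obstacle, being more delicate than the $2\times2$ case of Lemma~\ref{deff}. I take $\psi_1 = 1$, which contributes a column with all entries equal to $-2\mu_{\DD^m}$, and $\psi_j = \hat v_{k_j}$ for $j=2,\ldots,m$, where $\hat v_{k_j}$ is a smooth extension to $M$ of the $k_j$-th Neumann eigenfunction of $\Om$. After row-reducing against the first column, invertibility of $D_s\La(0,0)$ reduces to invertibility of the $(m-1)\times(m-1)$ Gram-type matrix with entries $\int_\Om(v_k^2-v_1^2)\,v_{k_j}$ for $k,j=2,\ldots,m$. The decisive input is the linear independence of $\{v_k^2-v_1^2\}_{k=2}^m$ in $L^2(\Om)$, which follows from the explicit form $v_k=R(|z|)z_k/|z|$: any relation $\sum_{k=2}^m c_k(v_k^2-v_1^2)=0$ forces $\sum c_k(z_k^2-z_1^2)\equiv 0$ on $\DD^m$, and the polynomials $z_k^2-z_1^2$ are linearly independent, so all $c_k=0$. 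Since $\{v_\ell\}_{\ell\geq 0}$ is an $L^2$-orthonormal basis of $\Om$, a standard selection argument then furnishes indices $k_2,\ldots,k_m$ for which the Gram-type matrix is invertible; the implicit function theorem of~\cite{Halkin} yields, exactly as in Section~\ref{S.lemma}, a continuous curve $s=R(\ep)$ along which the first $m$ eigenvalues collapse to $\mu_{\DD^m}$, together with a family of metrics $\overline g_\ep$ whose associated eigenfunctions approximate $v_1,\ldots,v_m$ in $H^1(\Om)$.

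The final Step~4 is essentially identical to the argument following Proposition~\ref{evalcol2}. Standard elliptic estimates promote the $H^1$ control to $C^1$ control on compact subsets of $\Om$; the transversality~\eqref{trans} is preserved under sufficiently small $C^1$ perturbations, so Thom's isotopy theorem produces a diffeomorphism $\Phi_\ep$ of $M$, arbitrarily close to the identity in $C^1(M)$ and equal to it outside a neighborhood of $\Si$, mapping $\Si$ onto a connected component of $\bigcap_{k=1}^m\overline u_{k,\ep}^{-1}(0)$. Setting $g:=\Phi_\ep^*\overline g_\ep$ and $u_k:=\overline u_{k,\ep}\circ\Phi_\ep$ produces the desired metric, and the structural stability of the intersection is immediate from the fact that~\eqref{trans} is an open condition in $C^1$.
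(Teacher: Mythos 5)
Your proof takes essentially the same route as the paper's own argument in Section~\ref{S.higher}: same product metric on $\Om\cong\Si\times\DD^m$ normalized so that $\mu_{\DD^m}$ becomes the first nontrivial Neumann eigenvalue of $\Om$ with multiplicity exactly $m$, same Bessel--spherical harmonic eigenfunctions giving transversality along $\Si$, Proposition~\ref{lamconv} reused verbatim, an $m$-parameter conformal family $h_s$ for the collapse step, and a Halkin implicit-function argument followed by Thom isotopy. Your handling of the linear algebra --- taking $\psi_1=1$, row-reducing to an $(m-1)\times(m-1)$ matrix, and proving linear independence of $\{v_k^2-v_1^2\}_{k\geq2}$ directly from the explicit form $v_k=R(|z|)z_k/|z|$ --- is a harmless reorganization of the paper's lemma, which instead establishes linear independence of the full set $\{v_k^2\}_{k=1}^m$ and selects $m$ Neumann modes whose span is not orthogonal to $\mathrm{span}\{v_i^2\}$, observing afterwards that $\psi_1=1$ is permissible.

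There is, however, one genuine error worth flagging. You assert that the Hadamard computation of Section~\ref{S.lemma} ``extends unchanged'' to give $D_{s_i}\mu_{k,s}(0)=\tfrac12\int_\Om v_k^2(\De\psi_i-4\mu_{\DD^m}\psi_i)$. This is the $d=3$ formula. For a conformal change $\tilde g=e^{2\phi}g_0$ in dimension $d$ one has $\Delta_{\tilde g}u=e^{-2\phi}\big(\Delta_{g_0}u+(d-2)\,g_0(\nabla\phi,\nabla u)\big)$, so the gradient term picks up a factor $(d-2)$ and the correct variational formula is
\[
D_{s_i}\mu_{k,s}(0)=\tfrac12\int_\Om v_k^2\bigl((d-2)\De\psi_i-4\mu_{\DD^m}\psi_i\bigr),
\]
which is exactly what the paper records. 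The mistake does not derail the argument --- with $\psi_j=\hat v_{k_j}$ the entry of $D_s\La(0,0)$ carries the coefficient $(d-2)\mu_{k_j}+4\mu_{\DD^m}$ rather than $\mu_{k_j}+4\mu_{\DD^m}$, and since both are strictly positive for $d\geq2$ the rank computation and the selection of indices go through unchanged --- but as written your formula is wrong in every dimension other than three, and in a complete proof the $(d-2)$ factor must appear.
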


The proof of this result goes almost exactly as the proof of Theorem~\ref{T.main}. First, we construct a reference metric $g_0$ on $M$ such that 
$$
g_0|_{\Om}=g_\Si+|dz|^2\,,
$$
where $\Om$ is a tubular neighborhood of $\Si$, which can be identified with $\Si\times \DD^m$ because $\Si$ has trivial normal bundle, and $z\in \DD^m$ are coordinates parametrizing the disk $\DD^m$. We take the metric $g_\Si$ on $\Si$ such that the first nontrivial eigenvalue of the Laplacian on $\Si$ with $g_\Sigma$ is bigger than the first Neumann eigenvalue $\mu_{\DD^m}$ of $\DD^m$. Therefore, the Neumann Laplacian in $\Om$ with the metric $g_0$ has eigenvalues $\mu_0=0$,
\[
\mu_1=\cdots=\mu_m=\mu_{\DD^m},
\]
and $\mu_k>\mu_{\DD^m}$ for $k\geq m+1$. The eigenspace of $\mu_1$ is then $m$-dimensional, with a particular orthogonal basis being
\[
v_k=r^{1-\frac{m}{2}}J_{\frac{m}{2}}\left(\sqrt{\mu_{\DD^m}}\,r\right)Y_{1,k}\left(\theta\right) =C_mz_k +O(|z|^2)\,,
\]
where $J_i$ denotes the $i$\textsuperscript{th} order Bessel function and $Y_{1,k}$, $1\leq k\leq m$, are the first order spherical harmonics on $\SS^{m-1}$. The explicit expression of the constant $C_m$ is not relevant for our purposes. From this formula for the eigenfunctions it is easy to check that the intersection
$$
v_1^{-1}(0)\cap\cdots\cap v_m^{-1}(0)
$$
is transverse and coincides with the submanifold $\Si$. The same holds for any other basis of the eigenspace corresponding to $\mu_1$. 

Step~2 is the same as in the proof of Theorem~\ref{T.main}, so that we can construct a $1$-parameter family of smooth metrics $g_t=f_tg_0$ such that the corresponding eigenvalues $\la_{k,t}$ and eigenfunctions $v_{k,t}$ behave when $t\to \infty$ as in Proposition~\ref{lamconv}. In Step~3 we can prove a result analogous to Proposition~\ref{evalcol2}. The only difficulty is in choosing the functions $\psi_i$ (see Section~\ref{S.lemma}) for the conformal factor
$$h_s=e^{2s_1\psi_1+\cdots +2s_m\psi_m}\,,$$ 
where $s\equiv (s_1,\cdots,s_m)\in\RR^m$. The conditions that these functions must satisfy are that  their normal derivatives on $\partial\Om$ must be zero, and the differential $D_s\La(0,0)$ has maximal rank, where the map $\La:\RR^+\times\RR^m\to\RR^m$ is defined as:
\begin{equation*}
\La(\ep,s):=\begin{cases}
(\la_{1,\ep,s}-\mu_{\DD^m},\cdots, \la_{m,\ep,s}-\mu_{\DD^m})& \text{if }\ep>0\,,\\
(\mu_{1,s}-\mu_{\DD^m},\cdots,\mu_{m,s}-\mu_{\DD^m})& \text{if }\ep=0\,.
\end{cases}
\end{equation*}
The quantities $\la_{k,\ep,s}$ and $\mu_{k,s}$ are the eigenvalues corresponding to the metrics $g_{\ep,s}:=h_s g_T$, $T=\ep^{-1}$, and $g_{0,s}:=h_sg_0$, respectively, with their associated orthonormal eigenfunctions $u_{k,\ep,s}$ and $v_{k,s}$. By Kato--Rellich we can assume that all these quantities are analytic in the variables $\ep>0$ and $s\in\RR^m$.  

The derivative $D_s\La(0,0)$ can be computed using a Hadamard-type formula as in Section~\ref{S.lemma}, thus yielding:
$$
(D_s\La(0,0))_{ij}=\frac12\int_\Om v_i^2((d-2)\De \psi_j-4\mu_{\DD^m}\psi_j)
$$
for $1\leq i,j\leq m$. To compute the rank of this matrix, we can prove an equivalent lemma to Lemma~\ref{deff} by noting that the transversality of the nodal sets of $v_k$, $1\leq k\leq m$, implies that the functions $v_k^2$ are linearly independent and hence their span $V$ is $m$-dimensional. This allows us to choose $m$ linearly independent Neumann eigenfunctions $\{v_{k_1},\ldots,v_{k_{m}}\}$ so that no function in their span is orthogonal to the subspace $V$ (these eigenfunctions exist because the set of Neumann eigenfunctions is a basis of $L^2(\Om)$).

Therefore, if we set
\begin{equation*}
\psi_i=\hat v_{k_{i}}
\end{equation*}
for $i\in\{1,\ldots, m\}$, where $\hat v_k$ denotes any smooth extension to the whole manifold $M$ of the Neumann eigenfunction $v_k$, it follows that the vectors
\begin{align*}
w_i:=-\frac12\left(\int_\Om\Big((d-2)\mu_{k_1}+4\mu_{\DD^m}\Big)v_i^2v_{k_1},\cdots,
\int_\Om\Big((d-2)\mu_{k_m}+4\mu_{\DD^m}\Big)v_i^2v_{k_m}\right)
\end{align*}
for $1\leq i\leq m$, are linearly independent, where each vector $w_i$ is the $i^{\mathrm{th}}$ row of the matrix $D_s\La(0,0)$. Indeed, if $w_i$ were linearly dependent, there would exist a nontrivial set of constants $\{c_i\}_{i=1}^m\subset \RR$ such that
\begin{equation*}
\sum_{i=1}^m c_i\int_{\Omega}v_i^2 v_{k_j}=0
\end{equation*}
for all $1\leq j\leq m$. Hence, the function $F:=\sum_{i=1}^m c_iv_i^2$ would be a linear combination of the functions $v_i^2$ that is orthogonal to each $v_{k_j}$. By the construction of the Neumann eigenfunctions $v_{k_j}$, this would imply that $F=0$ and from the linear independence of the set $\{v_i^2\}_{i=1}^m$ we would obtain that $c_i=0$ for every $1\leq i\leq m$, thus proving the claim. Notice that, as in the proof of Proposition~\ref{evalcol2}, we can take $\psi_1=1$.

The rest of the proof goes exactly as in Section~\ref{S.lemma} using the implicit function theorem~\cite[Theorem B]{Halkin} to prove a result analogous to Proposition~\ref{evalcol2}, and Thom's isotopy theorem in Step~4, so the details are omitted.

\begin{remark}
Theorem~\ref{T.high} has the following striking corollary: since any
exotic sphere of dimension~$m$ embeds smoothly in $\SS^{2m}$ with trivial normal bundle~\cite{exotic}, we conclude that there is a metric on $\SS^{2m}$ whose first nontrivial eigenvalue has multiplicity $m$ and the exotic sphere is a component of the transverse intersection of the nodal sets $u_1^{-1}(0)\cap\cdots\cap u_m^{-1}(0)$. Recall that an exotic sphere is a smooth manifold that is homeomorphic but not diffeomorphic to the standard sphere. 
\end{remark}

\section*{Acknowledgments}
The authors are supported by the ERC Starting Grants~633152 (A.E.) and~335079
(D.H.\ and D.P.-S.). This work is supported in part by the
ICMAT--Severo Ochoa grant
SEV-2011-0087.

\bibliographystyle{amsplain}

\end{document}